\documentclass[11pt]{article}

\usepackage[T1]{fontenc}
\usepackage[utf8]{inputenc}
\usepackage{lmodern}
\usepackage{microtype}
\usepackage[margin=1in]{geometry}
\usepackage{setspace}

\usepackage{amsmath,amssymb,amsthm,mathtools}
\numberwithin{equation}{section}
\newtheorem{theorem}{Theorem}

\newtheorem{proposition}{Proposition}
\newtheorem{corollary}[theorem]{Corollary}
\theoremstyle{definition}

\newtheorem{question}{Question}

\theoremstyle{remark}
\newtheorem{remark}[theorem]{Remark}

\usepackage{graphicx}
\graphicspath{{figures/}}
\usepackage{booktabs}
\usepackage{tabularx}
\usepackage{longtable}
\usepackage{array}
\usepackage{enumitem}
\usepackage{float}
\usepackage{caption}
\providecommand{\tightlist}{\setlength{\itemsep}{0pt}\setlength{\parskip}{0pt}}

\usepackage{xcolor}
\usepackage[backend=biber,style=numeric,sorting=nyt]{biblatex}
\usepackage[hidelinks]{hyperref}
\usepackage[nameinlink,noabbrev]{cleveref}
\crefname{question}{question}{questions}
\Crefname{question}{Question}{Questions}
\hypersetup{
  pdftitle={Exact Area-Range Minima in the Quantitative Monsky Problem for Five and Seven Triangles},
  pdfauthor={Muxi Li}
}

\title{Exact Area-Range Minima in the Quantitative Monsky Problem\\
for Five and Seven Triangles\\[0.4em]
\large Computer-Assisted Certification, a Nine-Triangle Upper Bound,\\
and Limits of the Single-Cap Zig-Zag Family}
\author{Muxi Li\\
\small\texttt{limuxi@ustc.edu.cn}}

\begin{document}
\maketitle
\begin{abstract}
For a dissection \(D\) of the unit square into \(n\) nondegenerate triangles, let
\[
R(D)=\max_i a_i-\min_i a_i,
\qquad
\Delta(n)=\inf_D R(D).
\]
We first note that this infimum is attained for every \(n\ge2\), so \(\Delta(n)\) is always a genuine minimum. We then determine the exact minima for \(n=5\) and \(n=7\), allowing general triangular dissections with T-junctions. For five triangles,
\[
\Delta(5)=\frac{5\sqrt5-11}{8},
\]
and equality forces three areas \((3-\sqrt5)/4\) and two areas \((3\sqrt5-5)/8\). For seven triangles,
\[
\Delta(7)=r_7=0.0002011756316409390820\ldots,
\]
where \(r_7\) is the unique root in \((0,1/4900)\) of
\[
864r^4+2160r^3-6060r^2+4972r-1.
\]
Moreover every minimizing seven-triangle dissection has exactly four areas
\[
H_7=\frac{1+3r_7}{7}
\]
and three areas
\[
L_7=\frac{1-4r_7}{7},
\]
although the minimizing geometry need not be unique.
The five-triangle proof uses a finite computer-certified classification followed by exact analytic inequalities. The seven-triangle result uses exact graph enumeration, a five-parameter multiaffine rational model permitting cyclic T-junction dependencies, integer interval certificates, and analytic treatment of the seven surviving types. No floating-point optimization enters the final proof chain.

We also study the first case beyond these exact results. Although \(n=3,5,7\) share a two-level single-cap zig-zag closure mechanism, that geometry is already noncompetitive at \(n=9\). A tilted-strip construction gives
\[
\Delta(9)\le r_{\mathrm{tilt}}
=0.0001273496861283553341\ldots,
\]
where \(r_{\mathrm{tilt}}\) is the unique root in \((0.000127,0.000128)\) of
\[
4864r^4-824r^3-18804r^2-7850r+1,
\]
and this value is the exact minimum within the tilted-strip topology. Conversely, every nine-triangle dissection in the complete single-cap two-rail zig-zag family, with arbitrary continuous areas and all \(2^8\) direction sequences, has \(R>1/3500\). Thus a global nine-triangle minimizer must leave that family. The exact value of \(\Delta(9)\) remains open.
\end{abstract}

\section{Introduction}

Monsky's theorem~\cite{Monsky1970} states that a square cannot be dissected into an odd number of equal-area triangles. The quantitative version asks how close an odd dissection can come to equal areas. Following Labbé, Rote, and Ziegler~\cite{LabbeRoteZiegler2020}, we measure the deviation by the range

\[
R(D)=\max_{i,j}|a_i-a_j|=\max_i a_i-\min_i a_i,
\]

and define \(\Delta(n)\) to be the infimum of this range over dissections of the unit square into \(n\) triangles. A \emph{dissection} is allowed to have T-junctions; it need not be a simplicial triangulation. An elementary compactness argument, recorded in \cref{prop:attainment}, shows that the infimum is attained for every \(n\ge2\). Consequently, Monsky's theorem already implies nonconstructively that \(\Delta(n)>0\) for every fixed odd \(n\); the quantitative lower bounds of Labbé--Rote--Ziegler provide explicit estimates and asymptotic information.

Labbé, Rote, and Ziegler~\cite{LabbeRoteZiegler2020} established general lower bounds from real algebraic geometry and superpolynomial upper bounds from explicit zig-zag constructions. Their computations enumerated dissection types with at most eight skeleton nodes and identified small-\(n\) configurations with low area deviation~\cite[Section~6]{LabbeRoteZiegler2020}. A complete global treatment of seven triangles also requires the nine-node layer. Here we determine two small odd cases exactly and then use the next case to test the geometric pattern suggested by them.

Our first result gives the exact five-triangle minimum.

\begin{theorem}[Five triangles]\label{thm:n5}
For every dissection \(D\) of the unit square into five nondegenerate triangles, including dissections with T-junctions,

\[
R(D)\ge R_5:=\frac{5\sqrt5-11}{8}.
\]

Equality is attainable. Moreover, equality forces the multiset of areas to be

\[
\{m,m,m,M,M\},\qquad
m=\frac{3-\sqrt5}{4},\qquad
M=\frac{3\sqrt5-5}{8}.
\]
\end{theorem}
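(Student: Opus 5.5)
The plan is to follow the strategy announced in the abstract. First, a finite combinatorial classification reduces the problem to a small list of dissection types. Then, for each surviving type, the range is minimized analytically as a function of the free geometric parameters.

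\emph{Classification.} By \cref{prop:attainment} a minimizer exists. Its combinatorial skeleton is a planar graph with five triangular faces inside the square, and T-junctions are allowed. Euler's formula, together with the angle count (each corner of the square takes at least one triangle corner, and each boundary or interior vertex contributes a fixed angle sum), bounds the number of skeleton nodes. So only finitely many combinatorial types occur, and I would enumerate them by computer. For each type, the vertex coordinates are constrained by collinearity conditions (T-junctions and side nodes) and the areas are polynomial in a few free parameters. Types where some area can be made independent, or where the classical argument forces a large range, are discarded with crude certified interval bounds: if \(R\ge 1/40>R_5\approx 0.0225\) on the whole parameter box, the type dies. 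I expect this pruning to leave essentially one family, the single-cap zig-zag. In it, a fan from one corner (or a two-rail zig-zag strip) produces three triangles of one area level and two of another, tied together by the closure condition of the square.

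\emph{Analytic step.} In the surviving family I parametrize by the positions of the one or two free nodes on the sides, writing the five areas \(a_1,\dots,a_5\) explicitly with \(\sum a_i=1\). Since the areas are affine in each parameter, the range on a fixed combinatorial type is minimized where the extreme values balance. That is, at the optimum all areas sit at two levels, \(m\) for three triangles and \(M\) for two, with \(3m+2M=1\). Substituting this into the closure relation gives a quadratic. From the stated values, \(m=(3-\sqrt5)/4\) satisfies \(4m^2-6m+1=0\), equivalently \(m^2=(3m-\tfrac12)\cdot\tfrac12\cdot\ldots\), and the precise relation will come out of the geometry. Then \(R=M-m=(1-5m)/2=(5\sqrt5-11)/8\), consistent with the claim. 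To prove this is the minimum, not just a critical value, I would argue by a perturbation/exchange inequality: if \(R<R_5\), then all areas lie in \([m',m'+R]\), and plugging these bounds into the multiaffine closure identity yields a contradiction. This is an explicit inequality in one or two variables that can be checked exactly.

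\emph{Equality and attainment.} I would exhibit the explicit dissection realizing the two-level configuration, which shows that equality is attainable. Equality forces every inequality used above to be tight, so each area equals \(m\) or \(M\), with multiplicities three and two.

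The main obstacle is making the classification genuinely complete and rigorous with T-junctions. I would handle it by enumerating skeleton graphs up to the Euler bound exactly, and by certifying each elimination with exact rational interval arithmetic rather than floating point.
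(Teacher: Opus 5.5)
Your classification step matches the paper in spirit: exact enumeration of the six- and seven-node skeletons, with exact rather than floating-point certificates. The paper, however, reduces to \emph{area relations}, not to geometric families. For each of the $20+299$ classes it symbolically certifies a polynomial identity among the five areas: a forced zero area, a forced half-area relation, or one of $F_0,\dots,F_8$. It then eliminates relations on the compact area window $(7/40,9/40)^4$. This also avoids the fact that with T-junctions the areas are rational, not affine, in the geometric parameters. What survives is not ``essentially the single-cap zig-zag'' but the single relation $F_0$, i.e.\ $x+y+2uv=\tfrac12$, which is shared by six different combinatorial classes. Because all survivors satisfy one area identity, a single analytic inequality handles them all.

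The genuine gap is the analytic step. The inference ``areas affine in each parameter, so at the optimum all areas sit at two levels, three at $m$ and two at $M$'' is not valid. A minimax point of $\max_i a_i-\min_i a_i$ over a three-parameter family need not put every area at an extreme; the paper's own discussion of two-level optimality flags singular, rank-deficient and boundary exceptions. Nothing in your argument selects the split $3+2$ either. In your proposal the split and the quadratic $4m^2-6m+1=0$ are read off from the target value, the closure relation is left to ``come out of the geometry,'' and the exchange inequality is never stated. So neither the bound nor the equality case is proved.

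The missing content is the explicit relation $x+y+2uv=\tfrac12$, equivalently $w=\tfrac12(1-2u)(1-2v)$ for the fifth area, together with a monotone reduction.
\begin{itemize}
\item Replace $x,y$ by their average $s=\tfrac14-uv$, and $(u,v)$ by $(t,t)$ with $t=\sqrt{uv}$. This does not increase the range: $t$ lies between $u$ and $v$, and by AM--GM $w\le\widetilde w=\tfrac12(1-2t)^2<s$ on the window.
\item One is then left minimizing $\operatorname{range}\{t,\tfrac14-t^2,\tfrac12-2t+2t^2\}$, whose unique minimizer is $t=m$.
\end{itemize}

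Once the relation is in hand, your bound-plugging idea also works. Write $\ell,h$ for the smallest and largest areas. Then $\ell\le w\le\tfrac12(1-2\ell)^2$ gives $\ell\le m$, and AM--GM gives $uv\le\tfrac14(1-\sqrt{2\ell})^2$. Combining with $x+y\le 2h$ yields $h-\ell\ge\tfrac12(\sqrt{2\ell}-3\ell)$, which is decreasing on the window and equals $R_5$ at $\ell=m$.

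Equality also needs this specific chain rather than ``all inequalities tight.'' At equality the one-variable problem forces $\sqrt{uv}=m$. If $u\ne v$, then $u+v>2m$ forces $w<m$ while $\max(x,y)\ge M$, so the range would exceed $R_5$. Hence $u=v=w=m$, and then $x+y=2M$ forces $x=y=M$.
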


Our second result gives the exact seven-triangle minimum.

\begin{theorem}[Seven triangles]\label{thm:n7}
Let

\[
Q_7(r)=864r^4+2160r^3-6060r^2+4972r-1.
\]

There is a unique root \(r_7\in(0,1/4900)\), and every dissection \(D\) of the unit square into seven nondegenerate triangles satisfies

\[
R(D)\ge r_7.
\]

Numerically,
\[
r_7=0.0002011756316409390820336285676\ldots.
\]
Define
\[
H_7=\frac{1+3r_7}{7},\qquad
L_7=\frac{1-4r_7}{7}.
\]
Then a seven-triangle dissection is minimizing if and only if its area multiset is
\[
\{H_7,H_7,H_7,H_7,L_7,L_7,L_7\}.
\]
In particular every minimizer has exactly two distinct areas, four high and three low. This statement does not assert uniqueness of the minimizing geometry.
\end{theorem}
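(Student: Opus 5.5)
The proof splits into an upper bound, which is an explicit construction, and a lower bound, which is an exhaustive classification followed by exact analysis. The abstract already outlines this pipeline, and I would carry it out in the following order.

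\emph{Upper bound and the polynomial.} I would first construct a seven-triangle single-cap two-level zig-zag dissection, in the style of the five-triangle extremal example of \cref{thm:n5}. Its triangle areas are rational functions of a few vertex coordinates, and I would impose that four areas equal \(H=(1+3r)/7\) and three equal \(L=(1-4r)/7\), with \(4H+3L=1\) holding automatically. Eliminating the coordinates, for example by resultants, leaves a single polynomial condition on \(r\). I expect it to factor to \(Q_7(r)\) up to harmless factors. Next I would use Sturm sequences or exact sign changes at rational endpoints to show that \(Q_7\) has exactly one root in \((0,1/4900)\); note that \(Q_7(0)=-1<0\) and \(Q_7(1/4900)>0\). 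Finally I would check with exact interval arithmetic that at \(r=r_7\) the coordinates lie strictly inside the square and all triangles are nondegenerate. This proves \(\Delta(7)\le r_7\), and \cref{prop:attainment} guarantees that a minimizer exists.

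\emph{Lower bound via enumeration.} A seven-triangle dissection has a combinatorial skeleton: the vertices (corners, boundary points, interior points, and T-junction points lying on edges) together with incidence data. Euler-type counting bounds the number of skeleton nodes, and the nine-node layer is needed here. I would enumerate all combinatorial types exactly, up to the dihedral symmetries of the square. For each type I would parametrize the geometry. Boundary points have one free coordinate, interior points two, and T-junction points one parameter along their host segment; cyclic T-junction dependencies are resolved by solving a linear system, which yields rational functions. I would reduce each type to at most five parameters with multiaffine rational area formulas. On each type's parameter box I would then certify \(R\ge 1/4900>r_7\) using integer interval arithmetic with subdivision. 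Subdivision near degenerate faces of the box is handled by noting that a small triangle area already forces \(R\) to be large, since the mean area is \(1/7\).

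\emph{Survivors and the equality case.} The types not eliminated by this certificate are the ones admitting near-equal areas; I expect about seven of them, all zig-zag-like. For each survivor I would argue analytically. At a minimizer, if some area level took more than one distinct value, or if the split between high and low triangles were not four versus three, a perturbation along a direction in the free-parameter space would strictly decrease the range. This requires the Jacobian of the areas to have sufficient rank, which I would check exactly. Once the areas are reduced to two levels with counts \((k,7-k)\), the same elimination as in the first step gives a polynomial for each \(k\). I would show exactly that only \(k=4\) produces a root below \(1/4900\) and that this root is \(r_7\). This yields both \(R\ge r_7\) and the characterization of the area multiset.

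The main obstacle is the certified elimination in the enumeration step. Near-degenerate regions and near-minimizing regions both force very fine subdivision, because the target gap is of order \(10^{-4}\). Handling cyclic T-junction dependencies rationally without division by zero also requires care. I would attack this first by excising explicit neighbourhoods of the survivors' minimizers and treating those neighbourhoods analytically, and by using monotonicity of the multiaffine formulas in each variable to obtain tight interval bounds cheaply.
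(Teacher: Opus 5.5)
Your construction and your global reduction follow the paper's route: node layers up to nine, a five-parameter rational model with cyclic T-junction dependencies solved linearly, exact integer interval elimination at a threshold just above \(r_7\), and a short survivor list. For corners where the T-junction determinant \(D\) vanishes, the paper uses a concrete device: \(P_i=0\) there as well, so by multiaffine interpolation \(P_i/D\) on a box lies between its values at the nonsingular corners.

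The genuine gap is in your survivor and equality step. A rank condition on the area Jacobian does not produce a range-decreasing perturbation.
\begin{itemize}
\item Near a survivor, the realizable area vectors form a hypersurface \(F=0\) in \(\{\sum_i a_i=1\}\). Suppose \(a_k\) lies strictly between the minimum and maximum area. A first-order range-decreasing tangent direction is guaranteed only when \(\partial_kF\ne\frac17\sum_j\partial_jF\).
\item If that centered component vanishes and the others have compatible signs, the tangent hyperplane of a full-rank area map misses the whole open cone of range-decreasing directions. So a three-level minimizer is not excluded.
\item A perturbation also cannot force the \(4/3\) split. At a two-level minimizer it only forces the high and low sets to carry opposite signs of the centered normal, which leaves two complementary labelings.
\item Nothing can be ``checked exactly at the minimizer'' when you do not know where the minimizer is. Any nondegeneracy you rely on must be certified on the whole near-equal-area region of each survivor.
\item Your closure polynomials must be indexed by the labeling of which triangles are high, not just by \(k\).
\end{itemize}

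The missing ingredient is the paper's uniform certificate. On the entire area box around the equal-area point that contains every vector of range at most the threshold, each centered partial derivative of the closure function has a fixed strict sign. Here the closure function is \(G\) for the zig-zag and the common polynomial \(F\) for the three hard survivors.

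With this certificate no local argument is needed. In the parametrization \(x_i=\frac17+r(z_i-\bar z)\), \(z\in[0,1]^7\), the closure function is strictly monotone in each \(z_i\). Its relevant extremum over the range-\(r\) polytope is therefore attained at a single vertex by a unique area vector. At that vertex the closure defect is an explicit nonzero multiple of \(Q_7(r)\) whenever \(r<r_7\).

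This one argument gives \(R\ge r_7\) and, at \(r=r_7\), forces the multiset \(\{H_7,H_7,H_7,H_7,L_7,L_7,L_7\}\). It needs no rank hypothesis on the area map, no search over labelings, and no knowledge of where the minimizer lies. The other four survivors are reduced to the zig-zag family by area-preserving recutting.

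You could repair your route by adding this sign certificate together with a uniform rank certificate. At that point, however, it becomes a longer version of the paper's argument.
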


The proofs are exact, but the role of computation is different in the two cases. For \(n=5\), the machine is used to certify a finite classification and symbolic area identities; the sharp lower bound is then derived analytically. For \(n=7\), exact computation is also used for the global elimination of a large but finite collection of combinatorial types. The proof is nevertheless organized so that the mathematical meaning of every computational certificate is explicit. Executable certificates, exact machine-readable outputs, and reproduction instructions are provided in the companion proof artifact described in Section~\ref{sec:repro}.

A further theme is what happens immediately beyond the exact cases. The values for \(n=3,5,7\) arise as small positive roots of closure polynomials in a common two-level zig-zag family. At \(n=9\), however, the failure is stronger than a mismatch with the standard \(n\mapsto n+2\) extension. We construct a tilted-strip nine-triangle topology whose exact family minimum is
\[
r_{\mathrm{tilt}}=0.0001273496861283553341\ldots,
\]
strictly better than the optimized straight-strip extension of the seven-triangle optimum. At the same time, we prove that the entire single-cap two-rail nine-triangle zig-zag family, with arbitrary continuous areas and arbitrary direction sequence, satisfies \(R>1/3500\). Thus the old geometric family itself is globally noncompetitive at \(n=9\). We do not conjecture that the tilted-strip value is \(\Delta(9)\); instead we isolate a weaker two-level question and describe the complete enumeration problem that would be needed to determine \(\Delta(9)\) exactly.

\section{Definitions and proof architecture}

Throughout, the square is the unit square. A \emph{triangular dissection} is a finite collection of nondegenerate triangles with pairwise disjoint interiors whose union is the square. T-junctions are allowed. A \emph{triangulation} is the special case in which two triangles meet only in a common full edge, a common vertex, or not at all.

Before introducing the combinatorial machinery, we record a compactness observation that justifies the word ``minimum'' throughout the paper.

\begin{proposition}[Attainment of the minimum]\label{prop:attainment}
For every integer \(n\ge2\), the infimum defining \(\Delta(n)\) is attained. In particular,
\[
\Delta(n)=\min_{D\in\mathcal D_n}R(D),
\]
where \(\mathcal D_n\) denotes the space of genuine \(n\)-triangle dissections of the unit square.
\end{proposition}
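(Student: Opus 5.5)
The plan is a compactness argument. We encode a dissection by the vertex coordinates of its \(n\) triangles, extract a convergent subsequence from a minimizing sequence, and show the limit is a genuine \(n\)-triangle dissection. The only real issue is degeneration: some triangles could collapse in the limit, and we will handle this by repairing the limit rather than by preventing collapse.

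Fix \(n\ge2\) and a sequence \(D_k\in\mathcal D_n\) with \(R(D_k)\to\Delta(n)\). Note that \(\Delta(n)\le R(D)<1/n\) for a suitable explicit \(D\) if needed; for even \(n\), equal strips cut into triangles give \(\Delta(n)=0\). Represent \(D_k\) by an ordered list of triangles \(T_1^{(k)},\dots,T_n^{(k)}\), each given by three vertices in \([0,1]^2\). This is a point of the compact set \(([0,1]^2)^{3n}\), so we may pass to a subsequence along which every vertex converges. Write \(T_i\) for the limit triangles, possibly degenerate. Areas are continuous in the vertices, so \(a_i^{(k)}\to a_i:=\operatorname{area}(T_i)\), and \(\sum a_i=1\).

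Next we verify that the limit configuration is still a dissection apart from nondegeneracy. Interiors stay pairwise disjoint: if \(T_i^\circ\cap T_j^\circ\) contained an open disk, then for large \(k\) the triangles \(T_i^{(k)},T_j^{(k)}\) would contain slightly smaller disks near it, which would overlap. Coverage follows because \(\bigcup T_i\) is closed and every point of the square is a limit of points covered at stage \(k\); equivalently, the areas sum to \(1\) and the interiors are disjoint.

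The remaining step, which is the main obstacle, is nondegeneracy. If \(\min_i a_i>0\), every \(T_i\) is nondegenerate, the limit lies in \(\mathcal D_n\), and \(R(\text{limit})=\lim R(D_k)=\Delta(n)\), which finishes the proof. Suppose instead that some \(a_i^{(k)}\to0\). Then \(R(D_k)\ge\max_i a_i^{(k)}-\min_i a_i^{(k)}\ge 1/n-o(1)\), so \(\Delta(n)\ge 1/n\). It therefore suffices to exhibit one dissection with \(R<1/n\); this contradiction excludes collapse.

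For such a dissection, fan \(n\) triangles from a corner, or slice the square into \(n\) triangles of nearly equal area. For example, take a vertex at \((0,0)\) and subdivide the far boundary path from \((1,0)\) through \((1,1)\) to \((0,1)\) into \(n\) segments spanning equal areas. This is possible for \(n\ge2\) with one fan point at the corner \((1,1)\). It gives all areas exactly \(1/n\) and \(R=0<1/n\), which is the required contradiction. Hence no triangle collapses, and the limit is a genuine minimizer.
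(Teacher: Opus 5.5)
Your compactness extraction, your check that the limit is a possibly degenerate dissection, and your reduction are sound. The reduction is that collapse forces \(\liminf R(D_k)\ge 1/n\), so a single dissection with \(R<1/n\) suffices. This is a different route from the paper. The paper minimizes \(R\) directly on the closed set of possibly degenerate configurations. It then repairs a degenerate minimizer by discarding the zero-area pieces and subdividing one positive triangle into \(n-k+1\) pieces, which strictly lowers the range below the maximal area. That repair is what your opening paragraph announces, but your argument actually prevents collapse by an a priori bound.

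The genuine gap is your explicit comparison dissection. The claim that the corner fan gives all areas exactly \(1/n\) and \(R=0\) is false for every odd \(n\). With apex \((0,0)\) and \((1,1)\) as a fan point, a triangle on a base segment of length \(\ell\) along the right or top side has area \(\ell/2\). Each of these two sides therefore carries total area \(1/2\), so equal areas \(1/n\) would need exactly \(n/2\) triangles on each side. More fundamentally, a dissection with \(R=0\) for odd \(n\) would contradict Monsky's theorem, the very result the paper quantifies. The even case is trivial, since \(R=0\) is attained outright. So the odd case is the only one with content, and it is exactly where your step fails.

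The repair is one line. For odd \(n\ge3\), put \((n-1)/2\) fan triangles of area \(1/(n-1)\) on the right side and \((n+1)/2\) fan triangles of area \(1/(n+1)\) on the top side. This is a genuine dissection with
\[
R=\frac{1}{n-1}-\frac{1}{n+1}=\frac{2}{n^2-1}<\frac1n,
\]
because \(n^2-2n-1>0\) for \(n\ge3\). With that substitution your proof is complete.

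Compared with the paper, your version needs this concrete upper bound on \(\Delta(n)\). In exchange, it avoids arguing that the positive-area pieces of a degenerate limit still cover the square. The paper's subdivision argument is construction-free and never needs to know how small \(\Delta(n)\) is.
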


\begin{proof}
Let \(Q=[0,1]^2\). The genuine dissection space is obviously nonempty for every \(n\ge2\). Consider the set \(\mathcal K_n\subset(Q^3)^n\cong[0,1]^{6n}\) of ordered vertex arrays whose convex hulls
\[
T_1,\ldots,T_n\subseteq Q
\]
cover \(Q\) and have pairwise disjoint Euclidean interiors; degenerate triangles are allowed in \(\mathcal K_n\). Labels and vertex orderings are kept, so no quotient-space argument is needed.

The set \(\mathcal K_n\) is closed. Indeed, convergence of the vertices implies Hausdorff convergence of the corresponding convex hulls. The covering condition passes to the limit: for a fixed \(x\in Q\), from \(x\in\bigcup_iT_i^{(m)}\) one may pass to a subsequence on which the covering index is constant. Pairwise interior-disjointness also passes to the limit: if two limiting nondegenerate triangles had a common interior point, the barycentric coordinates of that point would be strictly positive in each triangle and would remain positive under sufficiently small perturbations of the vertices, forcing interior overlap for all sufficiently large \(m\). Thus \(\mathcal K_n\) is a closed subset of a compact cube and hence is compact.

The area of a triangle is a continuous function of its vertices. Therefore
\[
R(T_1,\ldots,T_n)
 =\max_i\operatorname{area}(T_i)-\min_i\operatorname{area}(T_i)
\]
is continuous on \(\mathcal K_n\), and hence attains a minimum there.

Suppose that a minimizer in \(\mathcal K_n\) contains a degenerate triangle. Let \(k<n\) be the number of positive-area triangles and let
\[
M=\max_i\operatorname{area}(T_i).
\]
Since at least one triangle has area zero, the range of this generalized dissection is exactly \(M\). Because the triangles cover \(Q\) and have pairwise disjoint interiors, the sum of their areas is \(1\); the degenerate pieces contribute zero. Hence the finite union of the \(k\) positive-area triangles is a closed subset of \(Q\) of area \(1\). A nonempty relatively open complement in \(Q\) would have positive area, a contradiction. Thus the positive-area triangles alone cover \(Q\).

Choose one positive-area triangle and subdivide it into \(n-k+1\) positive-area triangles, for instance by joining one vertex to \(n-k\) distinct interior points of the opposite side. Replacing the chosen triangle by these pieces produces a genuine \(n\)-triangle dissection. Every new area is positive and no area exceeds \(M\), so the new range is strictly smaller than \(M\). This contradicts minimality on \(\mathcal K_n\). Hence every minimizer in \(\mathcal K_n\) is nondegenerate and belongs to \(\mathcal D_n\), proving the claim.
\end{proof}

In view of \cref{prop:attainment}, we freely refer to \(\Delta(n)\) as the minimum area range. For odd \(n\), Monsky's theorem implies that this minimum is strictly positive, although it does not by itself give an effective lower bound.

The \emph{skeleton graph} has as nodes all triangle vertices and has an edge between consecutive nodes lying on the same side of a triangle. A node that lies in the relative interior of a side of another triangle is a side node or T-junction. The distinction between actual triangle corners and side nodes is essential: a proof of the global \(n=7\) theorem would be incomplete if it enumerated only ordinary triangulations.

The computer-assisted portions of the proof follow a common principle. The paper proves that every geometric dissection belongs to a finite list of combinatorial data; for each datum, the relevant geometric parameter space is embedded in a box; exact arithmetic certifies that either the box cannot contain a near-equal-area dissection or the type belongs to a short list treated analytically. Floating-point searches were useful during discovery, but no numerical optimizer is part of the proof.

\section{The exact five-triangle minimum}\label{sec:n5}

\subsection{A sharp construction}

Let

\[
\tau=\frac{3-\sqrt5}{2},
\qquad
A=(0,0),\ B=(1,0),\ C=(1,1),\ D=(0,1),
\]

and set

\[
E=(1,\tau),
\qquad
P=\left(\tau,\frac{1+\tau^2}{2}\right).
\]

The five triangles

\[
DAP,\quad DCP,\quad CEP,\quad AEP,\quad ABE
\]

form a dissection of the square. Their doubled areas are

\[
\tau,
\quad \frac{1-\tau^2}{2},
\quad (1-\tau)^2,
\quad \frac{1-\tau^2}{2},
\quad \tau.
\]

Since \((1-\tau)^2=\tau\), the actual areas are three copies of

\[
m=\frac{\tau}{2}=\frac{3-\sqrt5}{4}
\]

and two copies of

\[
M=\frac{1-\tau^2}{4}=\frac{3\sqrt5-5}{8}.
\]

Thus \(R(D)=M-m=R_5\).

\subsection{Finite reduction to 319 certified combinatorial classes}

The node formula of Labbé--Rote--Ziegler~\cite{LabbeRoteZiegler2020} implies that a five-triangle dissection of a square has six or seven skeleton nodes. The enumeration used here begins from all 3-connected simple planar graphs on seven and eight vertices, chooses the exterior apex, chooses the four square corners, assigns the three true triangle corners of each bounded face, and retains a deliberate over-enumeration of all geometrically possible dissections. After isomorphism reduction there are

\begin{itemize}
\tightlist
\item
  20 classes with six skeleton nodes; and
\item
  299 classes with seven skeleton nodes.
\end{itemize}

Every accepted type has exactly three continuous geometric parameters; compare the parameter count in Campbell--Brady--Nair~\cite{CampbellBradyNair2007}. The exact symbolic area functions of those parameters are then used to certify one of a finite list of area identities.

The classification is summarized in Table 1. The symbols \(Z\) and \(H\) denote, respectively, a forced zero area and a forced half-area relation. The remaining classes satisfy one of nine polynomial relations \(F_0,\ldots,F_8\) after a permutation of the five area variables.

\begin{table}[htbp]
\centering
\caption{Certified classification of the five-triangle combinatorial classes.}
\label{tab:n5-classification}
\small
\setlength{\tabcolsep}{4pt}
\begin{tabular}{@{}rrrrrrrrrrrrr@{}}
\toprule
$N$ & $Z$ & $H$ & $F_0$ & $F_1$ & $F_2$ & $F_3$ & $F_4$ & $F_5$ & $F_6$ & $F_7$ & $F_8$ & total \\
\midrule
6 & 3 & 15 & 1 & 1 & 0 & 0 & 0 & 0 & 0 & 0 & 0 & 20 \\
7 & 142 & 126 & 5 & 0 & 5 & 11 & 3 & 1 & 3 & 2 & 1 & 299 \\
\bottomrule
\end{tabular}
\end{table}

Here the four variables \(a,b,c,d\) label distinct triangles, but their area values may coincide; the fifth area is \(1-a-b-c-d\). The relations used in the classification are
\[
\begin{aligned}
F_0={}&4ab+2c+2d-1,\\
F_1={}&-4ad+4bc-2b-2c+1,\\
F_2={}&4ab-2a-2b-2c-2d+1,\\
F_3={}&4ab+4ac-2a-2b-2c-2d+1,\\
F_4={}&-4a^2-4ac-4ad+4a+4bc+2d-1,\\
F_5={}&4a^2+8abc+8ad-4a-4bc+4bd+4d^2-4d+1,\\
F_6={}&8a^2b-4a^2+8abc-8ab-4ac-4ad+4a+2b+2d-1,\\
F_7={}&8a^2b-4a^2+8abc-8ab-8ac+4a-4bc+2b\\
&\quad-4c^2-4cd+4c-1,\\
F_8={}&4ab(1-2c-2d)+4ac(1-2c)+4bd(1-2d)\\
&\quad-(1-2c)(1-2d).
\end{aligned}
\]

The machine check is symbolic: after substituting the rational area functions into a candidate relation, denominators are cleared and every coefficient of the numerator polynomial is checked to vanish. Numerical sampling is not used as a certificate.

\subsection{A narrow near-equality window}

If \(R\le R_5\), then in particular \(R<1/40\). Since the five areas have mean \(1/5\), every area lies in

\[
I=\left(\frac7{40},\frac9{40}\right).
\]

This narrow interval is enough to eliminate all certified relations except \(F_0\).

A \(Z\)-type cannot be a nondegenerate dissection. In an \(H\)-type, either one area is \(1/2\), which forces another area to be at most \(1/8\), or two areas sum to \(1/2\), which forces a gap of at least \(1/12\). Both bounds are much larger than \(R_5\).

For \(F_1,F_2,F_3,F_4,F_5,F_7,F_8\), write each variable as \(1/5+x_i\) with \(|x_i|<1/40\). Expanding the corresponding polynomial about the equal-area point and bounding every nonconstant monomial by exact rational arithmetic shows that the polynomial cannot vanish on \(I^4\). The remaining relation \(F_6\) is excluded by a direct derivative monotonicity argument on the closed box \([7/40,9/40]^4\).

Thus every dissection with \(R\le R_5\), including every possible equality case, must satisfy \(F_0=0\).

\subsection{\texorpdfstring{The sharp \(F_0\) inequality}{3.4. The sharp F\_0 inequality}}\label{the-sharp-f_0-inequality}

After renaming the five areas as \(u,v,x,y,w\), the critical relation is

\[
x+y+2uv=\frac12,
\qquad
u+v+x+y+w=1.
\]

Balancing \(x\) and \(y\) cannot increase the range, so we may replace them by their common average

\[
s=\frac14-uv.
\]

Next set \(t=\sqrt{uv}\). Replacing \((u,v)\) by \((t,t)\) and adjusting the fifth area to

\[
\widetilde w=\frac12-2t+2t^2
\]

again does not increase the range in the relevant interval. Indeed, \(t\) lies between \(u\) and \(v\), while
\[
w\le\widetilde w<s,
\qquad
s-\widetilde w=-\frac14+2t-3t^2\ge\frac{13}{1600}
\]
on \([7/40,9/40]\). Thus neither balancing step moves a value outside the previous area interval. The problem therefore reduces to minimizing

\[
r(t)=\operatorname{range}\left\{t,\frac14-t^2,\frac12-2t+2t^2\right\}
\]

for \(t\in(7/40,9/40)\).

Let

\[
m=\frac{3-\sqrt5}{4},
\qquad
\sigma=\frac{\sqrt2-1}{2}.
\]

The ordering of the three functions changes only at \(m\) and \(\sigma\). On \([7/40,m]\) the range is strictly decreasing; on \([m,\sigma]\) it is strictly increasing; and on \([\sigma,9/40]\) it remains strictly increasing. Hence the unique minimum occurs at \(t=m\), and

\[
r(m)=\left(\frac14-m^2\right)-m=\frac{5\sqrt5-11}{8}.
\]

At equality, the unique one-variable minimum forces \(t=m\), so \(s=M\) and \(\widetilde w=m\). If \(u\ne v\), then \(u+v>2m\) and hence \(w<\widetilde w=m\), whereas \(\max(x,y)\ge M\); this would give range greater than \(R_5\). Therefore \(u=v=w=m\). Finally, \(x+y=2M\) forces \(x=y=M\), since otherwise one of them would exceed \(M\). This proves the equality multiset \(\{m,m,m,M,M\}\).

\section{The seven-triangle zig-zag mechanism}\label{sec:n7-zigzag}

\subsection{Closure of a cap-plus-zig-zag dissection}

Let the square corners be

\[
P=(0,0),\quad Q=(1,0),\quad T=(1,1),\quad S=(0,1).
\]

Choose a cap of area \(c\) at the upper right corner and set

\[
R_c=(1,1-2c),\qquad
C=\frac1{4c}.
\]

The remaining trapezoid is cut into six triangles by a zig-zag. Let their areas be \(a_1,\ldots,a_6\) and let

\[
A_i=a_1+\cdots+a_i,
\qquad
\rho_i=\frac{C-A_i}{C-A_{i-1}}.
\]

A sign \(s_i\in\{-1,+1\}\) records whether the \(i\)th step advances along the lower or upper rail. The recursive coordinate construction closes exactly when

\[
\Phi_s(c;a_1,\ldots,a_6):=\prod_{i=1}^{6}\rho_i^{s_i}=1.
\]

This equation is both necessary and sufficient for geometric closure provided \(0<c<1/2\) and the areas are positive and sum to one.

\subsection{The optimal sign pattern and the quartic}

For

\[
s=(-,+,-,+,+,-),
\]

the closure condition simplifies to

\[
q_2q_5=(1-2c)q_1q_3,
\qquad
q_j=C-A_j.
\]

Set

\[
H=\frac{1+3r}{7},
\qquad
L=\frac{1-4r}{7},
\]

and choose

\[
(c;a_1,\ldots,a_6)=(H;H,L,H,L,L,H).
\]

Substitution gives the exact identity

\[
q_2q_5-(1-2H)q_1q_3
=
\frac{Q_7(r)}{2744(3r+1)}.
\]

The polynomial \(Q_7\) is strictly increasing on \([0,1/4900]\), with \(Q_7(0)<0<Q_7(1/4900)\). Thus it has a unique root \(r_7\) there. A rational isolating interval is

\[
\frac{201175631640939}{10^{18}}
<r_7<
\frac{201175631640940}{10^{18}}.
\]

At \(r=r_7\), these values are precisely \(H_7\) and \(L_7\) from \cref{thm:n7}:
\[
H_7=0.14294336098498897389\ldots,
\qquad
L_7=0.14274218535334803481\ldots.
\]

\begin{figure}[htbp]
\centering
\includegraphics[width=0.72\textwidth]{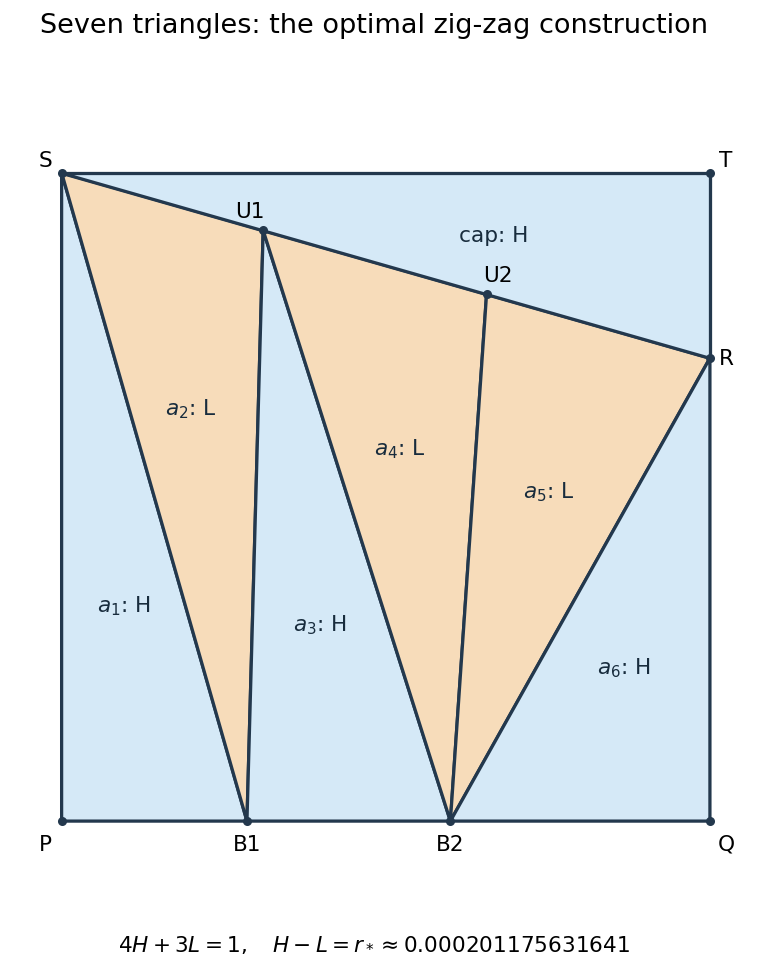}
\caption{An optimal seven-triangle zig-zag dissection. The four high-area triangles have area $H$ and the three low-area triangles have area $L$.}
\label{fig:n7-zigzag}
\end{figure}

\subsection{Continuous minimax within the full sign family}

The two-level pattern above is not assumed in advance. For a fixed target range \(r\), every area vector with sum one and range at most \(r\) can be written as

\[
x_i=\frac17+r(z_i-\bar z),
\qquad
0\le z_i\le1,
\qquad
\bar z=\frac17\sum_{j=0}^{6}z_j.
\]

For the sign pattern \((-,+,-,+,+,-)\), use the logarithmic closure function
\[
G=2\log q_2+2\log q_5-2\log q_1-2\log q_3-2\log(1-2c),
\]
which equals \(\log\Phi_s\) on the area-sum hyperplane. Write \(g_i=\partial_iG\) and \(\bar g=\frac17\sum_i g_i\). Exact rational interval bounds on the partial derivatives show that the centered gradient has sign pattern

\[
\mathrm{sgn}(g_i-\bar g)=(+1,+1,-1,+1,-1,-1,+1)
\]

throughout a box that contains every candidate with \(r\le1/4900\). Consequently \(G\) is strictly monotone in each cube coordinate and is maximized at the unique corner corresponding to

\[
(H;H,L,H,L,L,H).
\]

It follows that no vector of range \(r<r_7\) can close, while at \(r=r_7\) this corner does close.

The remaining 62 sign patterns are eliminated uniformly at the coarser threshold \(R\le1/4000\) by exact rational comparison with the equal-area closure values. The overall sign reversal gives the same closure equation. Hence the entire generalized six-step zig-zag family has minimum range \(r_7\).

\section{Global reduction for seven triangles}\label{sec:n7-global}

The main difficulty is to show that no other seven-triangle dissection has smaller range.

Let \(N\) be the number of skeleton nodes. Let \(b\) be the number of non-corner boundary nodes, \(t\) the number of interior side nodes (T-junctions), and \(j\) the number of ordinary interior nodes. An angle count gives

\[
N\in\{7,8,9\},
\qquad
b+t=2N-13,
\qquad
j=9-N.
\]

This divides the global proof into finitely many structural layers.

A useful coarse pruning principle is the following. If a line or subregion separates \(k\) triangles of total area \(A\) from the remaining \(7-k\), then

\[
R\ge\left|\frac Ak-\frac{1-A}{7-k}\right|
=
\frac{|7A-k|}{k(7-k)}.
\]

In particular, a complete square diagonal forces \(R\ge1/24\) and can be discarded far above the target scale.

The ordinary triangulations are exhausted according to the number of interior vertices. Exact integer interval certificates show that every ordinary seven-triangle triangulation has \(R>1/1000\). The same bound holds for all seven-node dissections, including those with one interior T-junction. The remaining eight- and nine-node layers require a more flexible model.

\section{A unified five-parameter model with cyclic T-junctions}\label{sec:five-param}

A striking simplification is that every remaining combinatorial type has exactly five continuous parameters. Merge the collinear atomic edges through each T-junction into maximal support chains. Non-T nodes are called anchors. Each non-corner boundary anchor contributes one parameter, each ordinary interior anchor contributes two coordinates, and each T-junction contributes one affine position parameter. Thus

\[
b+2j+t=5.
\]

Cyclic dependencies among T-junctions cannot be ignored. Let \(X\) be the matrix of unknown homogeneous coordinates of the T-nodes. Their affine relations can be written

\[
X=MX+B,
\qquad
A=I-M,
\qquad
D=\det A.
\]

For a valid interior parameter point, every T-node eventually reaches an anchor along the dependency graph. This implies \(\rho(M)<1\), hence \(A^{-1}\) exists and is nonnegative and \(D>0\).

For each triangular face, a Schur-complement determinant gives an exact numerator \(P_i\) such that

\[
\frac{P_i}{D}=2a_i^{\mathrm{signed}}.
\]

Crucially, \(D,P_1,\ldots,P_7\) are multiaffine functions of the same five parameters. At a box corner one may have \(D=0\), but then \(P_i=0\) as well. Multiaffine interpolation shows that on every parameter box the ratio \(P_i/D\) lies between the ratios at the nonsingular corners. This provides a rigorous interval enclosure without requiring a uniform positive lower bound for \(D\).

This is the key device that permits exact certification even when the T-junction support relation contains cycles.

\section{Enumeration, interval certification, and the seven survivors}\label{sec:certification}

\subsection{Complete graph enumeration}

After adjoining an exterior apex to the boundary cycle, every realizable skeleton becomes a simple 3-connected planar graph. Any nontriangular face can be triangulated by adding exactly \(t\) edges. Therefore every target skeleton is a subgraph of a maximal planar graph on the same vertex set.

The implementation enumerates maximal planar graphs by edge flips (using the fixed-vertex flip-connectivity framework; see, e.g.,~\cite{BurtonDattaSpreer2022}), uses uniqueness of the rotation system for 3-connected planar graphs, removes the admissible edges, assigns the straight-angle vertices of nontriangular faces, and finally chooses the four square corners among the boundary nodes. Canonical encodings eliminate isomorphic duplicates.

For the remaining eight- and nine-node layers, the marked counts are as follows.

\begin{table}[htbp]
\centering
\caption{Marked assignments in the remaining eight- and nine-node layers.}
\label{tab:n7-marked-counts}
\begin{tabular}{@{}rrr@{}}
\toprule
$N$ & boundary nodes $B$ & marked assignments \\
\midrule
8 & 4 & 1077 \\
8 & 5 & 2490 \\
8 & 6 & 2415 \\
9 & 4 & 1624 \\
9 & 5 & 6515 \\
9 & 6 & 12645 \\
9 & 7 & 14630 \\
9 & 8 & 10080 \\
\bottomrule
\end{tabular}
\end{table}

The enumerator intentionally produces a safe superset: some combinatorial assignments need not be geometrically realizable. This is harmless because the subsequent elimination is performed on the larger parameter space.

\subsection{Exact interval certificates}

Assume \(R\le1/4000\). Then every area lies in

\[
\left[\frac17-\frac1{4000},\frac17+\frac1{4000}\right].
\]

For each five-dimensional parameter box, the 32 corner values of \(P_i\) and \(D\) are computed exactly. If the entire ratio interval for any face misses the target area window, the box is discarded; otherwise the longest coordinate interval is bisected. All comparisons are integer cross-multiplications.

The general ratio engine processes 14,989 combinatorial cases and 361,046 boxes. An independent arbitrary-precision Python implementation reproduces the status, box count, and surviving boxes of the C++ implementation case by case. The C++ implementation uses signed 128-bit integers; a priori bounds keep all intermediate values below \(2^{105}\).

For the nine-node types, 45,494 marked assignments are partitioned into diagonal cases, forced nonpositive-area cases, directly recognized zig-zags, interval-eliminated cases, and seven survivors. The interval engine eliminates 14,270 of the 14,277 cases that actually enter this stage.

The seven fixed survivor indices are

\[
(6,2791),(6,2891),(6,3590),(6,3752),(6,3829),(7,2108),(7,2166),
\]

where the first coordinate is the boundary-node count \(B\) and the second is the zero-based case index in the corresponding certificate file.

\subsection{Four survivors by area-preserving recutting}

Four survivor types admit an exact local recutting. If two triangles partition a larger triangle by a segment through a point on one side, the same two areas can be reproduced by cutting from another vertex to a uniquely chosen point on another side. In the four relevant types this operation removes the exceptional T-junction and converts the dissection into the already solved generalized zig-zag family without changing the multiset of seven areas. Hence these four types satisfy \(R\ge r_7\).

\subsection{Three survivors by a common closure polynomial}

The remaining three types, with indices 3590, 3752, and 3829, have their seven areas relabeled as \(a,b,c,d,e,f,g\). All three satisfy the exact closure relation

\[
F(a,b,c,d,e,f,g)=0,
\]

where

\[
F=\bigl((1-2a)(1-2e)-2b\bigr)\bigl(1-4e(a+d)\bigr)
-2c(1-2e)(1-4ae).
\]

On the complete area box corresponding to range at most \(1/4000\), exact rational interval bounds give a fixed sign for every centered partial derivative \(\partial_iF-\overline{\partial F}\). Therefore, after the same cube parameterization of the range constraint used in the zig-zag proof, \(F\) is strictly monotone in each cube coordinate. Its unique minimum occurs at

\[
(a,b,c,d,e,f,g)=(H,H,H,L,H,L,L).
\]

At this point one has the exact identity

\[
F(H,H,H,L,H,L,L)=-\frac{Q_7(r)}{2401}.
\]

For \(r<r_7\), \(Q_7(r)<0\), so \(F>0\) throughout the entire admissible cube, contradicting the geometric closure equation \(F=0\). Equality constructions are obtained explicitly when \(r=r_7\).

\subsection{Equality rigidity: four high areas and three low areas}\label{subsec:n7-rigidity}

The same strict centered-gradient certificates also determine the area multiset at equality; no new enumeration or interval search is required. For \(r>0\), let
\[
\mathcal P_r=\left\{x\in\mathbb R^7:\ \sum_i x_i=1,\ \max_i x_i-\min_i x_i\le r\right\}.
\]
Every point of \(\mathcal P_r\) has a representation
\[
x_i=\frac17+r(z_i-\bar z),\qquad z\in[0,1]^7,
\qquad \bar z=\frac17\sum_i z_i.
\]
If a differentiable function \(U\) has a fixed strict sign for each centered partial derivative
\[
\partial_iU-\frac17\sum_j\partial_jU
\]
throughout \(\mathcal P_r\), then \(U\circ x\) is strictly monotone in every cube coordinate \(z_i\).  Its relevant extremum is therefore attained at the unique cube vertex selected by those signs.  Although this cube parameterization is not injective in general, the strict coordinate monotonicity forces every extremizing preimage to use the prescribed endpoints, so the extremizing area vector is unique.  When that vertex has four coordinates equal to one and three equal to zero, its area multiset is four copies of
\[
H(r)=\frac{1+3r}{7}
\]
and three copies of
\[
L(r)=\frac{1-4r}{7}.
\]

Choose the representative \((-,+,-,+,+,-)\) of the two surviving zig-zag sign sequences. Overall sign reversal negates the logarithmic closure function and leaves its zero set unchanged. For the chosen representative, the exact derivative certificate gives the strict centered-gradient sign pattern
\[
(+,+,-,+,-,-,+).
\]
Thus the logarithmic closure function \(G\) has a unique maximum on \(\mathcal P_{r_7}\), namely
\[
(H_7;H_7,L_7,H_7,L_7,L_7,H_7),
\]
and the quartic identity shows that this maximum is exactly the closing value \(G=0\). Hence every minimizing zig-zag dissection has four areas \(H_7\) and three areas \(L_7\).

The four recutting survivors preserve the full area multiset while converting the dissection to a solved zig-zag type, so they inherit the same rigidity. For the final three survivors, the exact centered-gradient signs of the common closure polynomial are
\[
(-,-,-,+,-,+,+).
\]
The unique minimum on \(\mathcal P_{r_7}\) is therefore
\[
(H_7,H_7,H_7,L_7,H_7,L_7,L_7),
\]
and the identity \(F=-Q_7(r_7)/2401=0\) forces equality to occur only there. All remaining combinatorial types were excluded at the strictly larger threshold \(1/4000\) (or stronger). Consequently
\[
R(D)=r_7
\quad\Longleftrightarrow\quad
\{a_1,\ldots,a_7\}_{\mathrm{multi}}
=\{H_7,H_7,H_7,H_7,L_7,L_7,L_7\}_{\mathrm{multi}},
\]
which proves the equality statement in \cref{thm:n7}.  This is rigidity of the area multiset only; distinct minimizing geometries are not ruled out.

Combining all layers proves \cref{thm:n7}.

\section{\texorpdfstring{A common closure-polynomial pattern for \(n=3,5,7\)}{A common closure-polynomial pattern for n=3,5,7}}\label{a-common-closure-polynomial-pattern-for-n357}

The exact values for three, five, and seven triangles can be placed in a single direct two-level zig-zag framework.

Let \(n=2m+1\) and let \(s=(s_1,\ldots,s_{2m})\) be a balanced sign sequence with \(s_i\in\{\pm1\}\) and \(\sum s_i=0\). Restrict all triangle areas to two values \(H>L\) with \(H-L=r\). We use the convention

\[
s_i=-1\iff a_i=H,
\qquad
s_i=+1\iff a_i=L,
\]

while the cap area \(c\) is chosen to be either \(H\) or \(L\). If there are \(h\) copies of \(H\) and \(\ell\) copies of \(L\) among all \(n\) triangles, then

\[
H=\frac{1+\ell r}{n},
\qquad
L=\frac{1-hr}{n}.
\]

The closure equation is

\[
\Phi_{n,s,c}(r)=
\prod_{i=1}^{n-1}
\left(\frac{C-A_i}{C-A_{i-1}}\right)^{s_i}=1,
\qquad
C=\frac1{4c}.
\]

After clearing denominators and factoring the numerator of \(\Phi-1\), the geometrically relevant small positive root is carried by a distinguished factor.

For the optimal small cases one obtains

\[
P_3(r)=4r-1,
\]

\[
P_5(r)=16r^2+44r-1,
\]

and

\[
P_7(r)=864r^4+2160r^3-6060r^2+4972r-1.
\]

Thus the exact values \(\Delta(3),\Delta(5),\Delta(7)\) are all selected by the same closure mechanism, even though the global proofs for \(n=5\) and \(n=7\) require substantially more than the direct zig-zag construction.

\begin{remark}[The direct \(n=9\) continuation]
The same algebraic closure procedure continues at \(n=9\). One direct two-level endpoint candidate occurs for
\[
s=-++-+--+,
\qquad c=L,
\]
and has distinguished factor
\[
P_9(r)=40960r^5+91648r^4+245632r^3+276560r^2+150308r-49,
\]
with small positive root
\[
0.000325801923513589\ldots.
\]
This remains a genuine member of the closure-polynomial family, but \cref{prop:n9-zigzag-exclusion} below shows that the entire single-cap two-rail geometry is noncompetitive for the global nine-triangle problem. Thus the direct \(P_9\) factor has structural rather than global-optimality significance.
\end{remark}

\section{Beyond seven triangles: a nine-triangle upper bound and the breakdown of the single-cap zig-zag paradigm}\label{sec:beyond-seven}

The exact cases \(n=3,5,7\) share a common two-level zig-zag closure mechanism. It is therefore natural to ask whether the same geometry remains competitive for larger odd \(n\). The answer is already negative for \(n=9\). The failure is stronger than the failure of any particular sign sequence or two-level ansatz: a new tilted-strip construction beats every member of the complete single-cap two-rail family.

\subsection{Inherited constructions as a baseline}

The extension construction recorded as Lemma~7.1 of Labbé--Rote--Ziegler~\cite{LabbeRoteZiegler2020} gives
\[
\Delta(n+2)\le \frac{n}{n+2}\Delta(n).
\]
Consequently,
\[
\Delta(9)\le \frac79r_7
=0.0001564699357207303971\ldots.
\]
For the present purpose one can optimize the elementary strip extension slightly by keeping track of the largest area in the seed dissection.

\begin{proposition}[Optimized straight-strip extension]\label{prop:straight-strip}
Suppose a unit-square dissection has minimum area \(L\), maximum area \(H\), and range \(r=H-L\). Attach a rectangular strip along one side and divide it into two congruent-area triangles of pre-normalization area \(a>0\). After the affine normalization back to a unit square, the range is
\[
\frac{\max(H,a)-\min(L,a)}{1+2a}.
\]
For the fixed seed dissection this expression is minimized at \(a=H\), giving
\[
R_{n+2}=\frac{r}{1+2H}.
\]
More generally, adding \(2m\) triangles of pre-normalization area \(H\) gives \(r/(1+2mH)\).
\end{proposition}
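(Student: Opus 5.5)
The plan is to make the construction explicit and then minimize a one-variable piecewise expression.

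\emph{Construction.} Take the unit-square seed dissection. Attach along one side (say the top) a rectangle of width \(1\) and height \(2a\). This rectangle has area \(2a\). Cut it along a diagonal into two triangles, each of area \(a\). The result is a dissection of the \(1\times(1+2a)\) rectangle into \(n+2\) nondegenerate triangles; any T-junctions created along the common side are allowed. Apply the affine map \((x,y)\mapsto(x,y/(1+2a))\). It sends this rectangle to the unit square, preserves triangles and incidences, and multiplies every area by the constant \(1/(1+2a)\). The new area multiset is therefore \(\{a_i/(1+2a)\}\cup\{a/(1+2a),a/(1+2a)\}\), where the seed areas \(a_i\) range over \([L,H]\) with both endpoints attained. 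Its range is \((\max(H,a)-\min(L,a))/(1+2a)\), as claimed.

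\emph{Optimization.} Write \(f(a)\) for this expression and split into three regimes.
\begin{itemize}
\item For \(a\le L\): \(f(a)=(H-a)/(1+2a)\), which is strictly decreasing.
\item For \(L\le a\le H\): \(f(a)=r/(1+2a)\), which is strictly decreasing.
\item For \(a\ge H\): \(f(a)=(a-L)/(1+2a)\), whose derivative is \((1+2L)/(1+2a)^2>0\), so it is strictly increasing.
\end{itemize}
The function \(f\) is continuous at \(L\) and at \(H\), so its unique minimum over \(a>0\) is at \(a=H\), with value \(r/(1+2H)\).

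\emph{General case.} Attach a strip of height \(2mH\) and cut it into \(m\) unit-width sub-rectangles of height \(2H\), each split by a diagonal into two triangles of area \(H\). After normalizing by \(1+2mH\), the maximum becomes \(H/(1+2mH)\) and the minimum becomes \(L/(1+2mH)\), giving range \(r/(1+2mH)\). Equivalently, one can iterate the single step: after one step the new maximum is still attained by triangles of area \(H/(1+2H)\), and the formula composes correctly.

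There is no real obstacle. The only point needing care is checking that the normalized seed maximum and minimum remain the extremes, which the case split above handles.
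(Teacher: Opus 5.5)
Your proof is correct and takes essentially the paper's route: the seed areas stay in $[L,H]$, the two new triangles have area $a$, rescaling by $1/(1+2a)$ gives the displayed range, and the expression decreases for $a\le H$ and increases for $a\ge H$. Your three-regime derivative check and the explicit stacked-strip construction for the $2m$ case just supply details the paper leaves implicit.
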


\begin{proof}
Before renormalization the old areas remain in \([L,H]\) and the two new areas are both \(a\), while the total area is \(1+2a\). This gives the displayed range after uniform area rescaling. For \(a\le H\) the expression decreases with \(a\), and for \(a\ge H\) it increases, so the minimum occurs at \(a=H\).
\end{proof}

By the equality rigidity in \cref{thm:n7}, every seven-triangle minimizer has maximum area \(H_7=(1+3r_7)/7\). Therefore
\[
\Delta(9)\le \frac{7r_7}{9+6r_7}
=0.0001564489532427382827\ldots.
\]
This small improvement is conceptually useful: inherited upper bounds depend not only on the seed range but also on the area profile of the seed.

\subsection{A tilted-strip nine-triangle construction}

We next deform the straight strip. Let the square have vertices
\[
P=(0,0),\quad Q=(1,0),\quad T=(1,1),\quad S=(0,1),
\]
and set
\[
H=\frac{1+4r}{9},\qquad
L=\frac{1-5r}{9},\qquad
c=H,
\]
\[
e=1-2H,\qquad f=1-2L,
\qquad E=(e,0),\qquad F=(f,1).
\]
Define
\[
Y=1-\frac{2c}{f},
\qquad
R=(e+(f-e)Y,Y).
\]
The right-hand quadrilateral \(EQTF\) is divided by the diagonal \(ET\) into two triangles of areas \(H\) and \(L\); the upper cap \(SFR\) has area \(H\). The remaining quadrilateral \(PERS\) is cut by the six-step direction sequence
\[
(-,+,-,+,+,-)
\]
into triangles of areas
\[
(H,L,H,L,L,H).
\]
The essential feature is that
\[
f-e=2r>0,
\]
so the interface \(EF\) is slightly tilted relative to the vertical sides of the square; see \cref{fig:n9-tilted}.

\begin{figure}[htbp]
\centering
\includegraphics[width=0.76\textwidth]{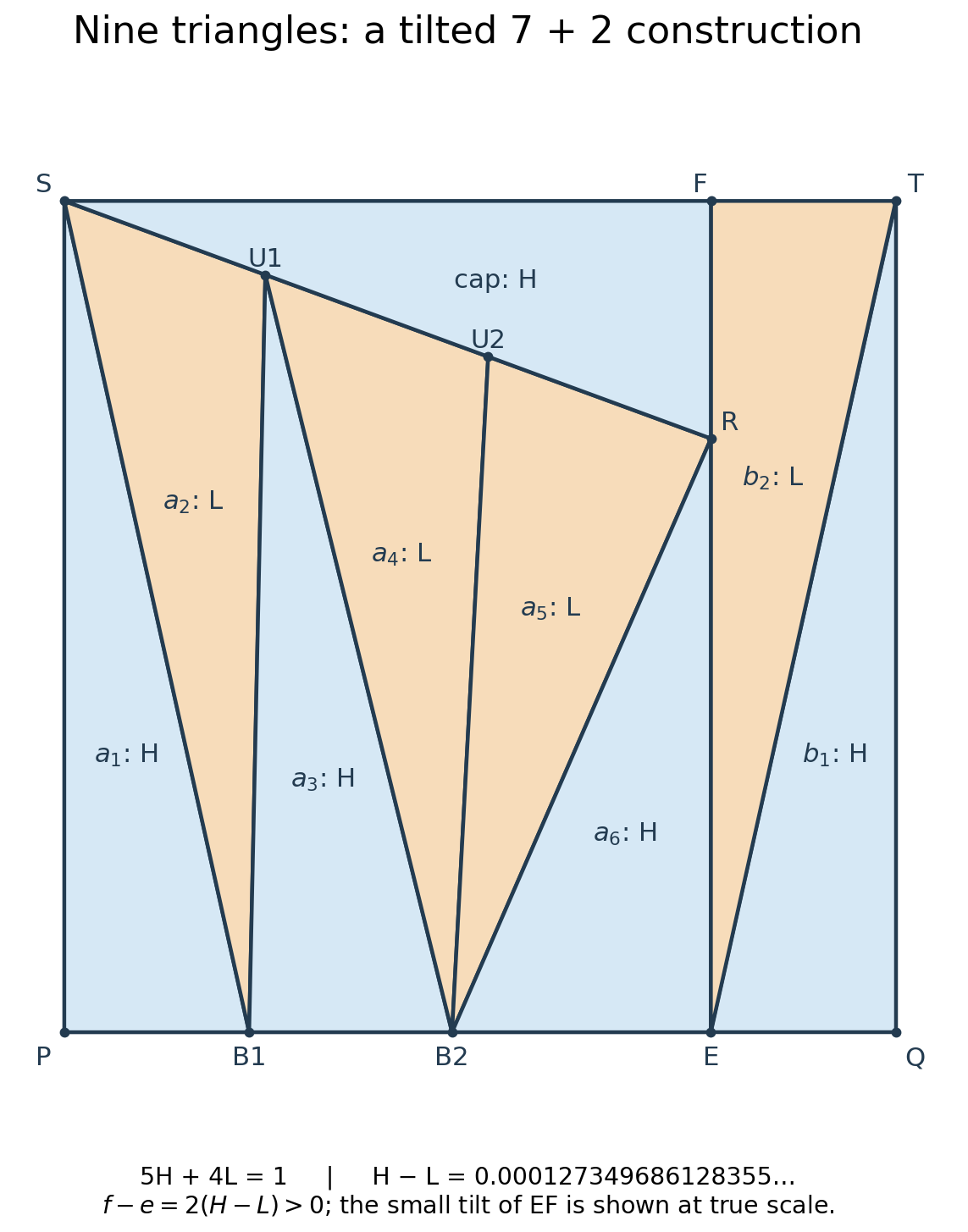}
\caption{The tilted \(7+2\) construction at \(r=r_{\mathrm{tilt}}\).  The nine areas are five copies of \(H\) and four copies of \(L\).  The interface \(EF\) has horizontal displacement \(f-e=2(H-L)>0\); the small tilt is shown at true scale.}
\label{fig:n9-tilted}
\end{figure}

Put
\[
D=e-f+\frac{f^2}{2c},
\qquad C=\frac D2,
\qquad A_i=\sum_{j=1}^i a_j,
\qquad q_i=C-A_i.
\]
The line through \(S\) and \(R\) meets the extension of the bottom side at \(O=(D,0)\). With the same multiplicative two-rail recursion as in the seven-triangle construction, the endpoint condition is equivalent to
\[
\prod_{i=1}^{6}\rho_i^{s_i}=\frac{2cD}{f^2},
\qquad \rho_i=\frac{q_i}{q_{i-1}}.
\]
For the displayed direction sequence this reduces, without squaring, to
\[
\mathcal F:=f q_2q_5-(f-2c)q_1q_3=0.
\]
Substituting the two-level areas gives the exact identity
\[
\mathcal F=-\frac{p(r)}{5832(4r+1)},
\]
where
\[
p(r)=4864r^4-824r^3-18804r^2-7850r+1.
\]

\begin{proposition}[Tilted-strip minimax]\label{prop:n9-tilted}
Let \(r_{\mathrm{tilt}}\) be the unique root of
\[
p(r)=4864r^4-824r^3-18804r^2-7850r+1
\]
in \((0.000127,0.000128)\). Then
\[
\Delta(9)\le r_{\mathrm{tilt}}
=0.000127349686128355334123703774\ldots.
\]
Moreover, within the fixed tilted-strip topology described above, the minimum possible area range is exactly \(r_{\mathrm{tilt}}\).
\end{proposition}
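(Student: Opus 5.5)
The proof splits into three parts: the root of \(p\) is unique, the construction is a valid dissection at \(r=r_{\mathrm{tilt}}\), and no area vector of smaller range can close in this topology. The last part is a centered-gradient minimax argument, the same one used for the seven-triangle zig-zag in \cref{sec:n7-zigzag}.

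\emph{Uniqueness of the root.} I would check that \(p(0.000127)>0>p(0.000128)\) by exact rational evaluation. I would then show \(p'(r)=19456r^3-2472r^2-37608r-7850<0\) on \([0,0.001]\): the linear and constant terms dominate the cubic and quadratic ones, so \(p\) is strictly decreasing there. A rational isolating interval can be recorded exactly as for \(r_7\).

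\emph{Validity of the construction.} At \(r=r_{\mathrm{tilt}}\), take the explicit points \(E,F,R,O\) and check the geometry. We need \(0<e<f<1\), \(0<Y<1\), and \(R\) on segment \(EF\). The cap \(SFR\) has area \(\tfrac12 f(1-Y)=c=H\) by the choice of \(Y\). The quadrilateral \(EQTF\) splits along \(ET\) into areas \(\tfrac12(1-e)=H\) and \(\tfrac12(1-f)=L\). Then run the six-step two-rail recursion inside \(PERS\), with rails \(PE\) (extended to \(O\)) and \(SR\). The recursion must produce nondegenerate triangles with points staying on the correct segments: \(q_i>0\) and every intermediate point strictly inside its rail. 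This follows from the perturbative size of \(r\), since all quantities are within \(O(10^{-3})\) of the equal-area configuration, where everything is strictly interior. Closure is exactly \(\mathcal F=0\), and the stated identity \(\mathcal F=-p(r)/(5832(4r+1))\) gives this at \(r_{\mathrm{tilt}}\). So this is a genuine nine-triangle dissection with five areas \(H\) and four areas \(L\), range \(H-L=r_{\mathrm{tilt}}\), and hence \(\Delta(9)\le r_{\mathrm{tilt}}\).

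\emph{Family minimality.} Within the fixed topology, let the nine areas be free: the cap \(c\), the two strip triangles \(\alpha=\tfrac12(1-e)\) and \(\beta=\tfrac12(1-f)\), and \(a_1,\dots,a_6\), with sum \(1\). Then \(e,f,D\) and the \(q_i\) are rational in these areas, and closure is \(\mathcal F=0\) as a function on the nine-dimensional hyperplane. I would use the parameterization \(x_i=\tfrac19+r(z_i-\bar z)\) with \(z\in[0,1]^9\), exactly as in \cref{subsec:n7-rigidity}. Exact rational interval bounds on the partial derivatives over the box of area vectors with range at most, say, \(1/5000\) should certify a fixed strict sign for each centered partial \(\partial_i\mathcal F-\overline{\partial\mathcal F}\). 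The sign pattern should select precisely the corner \((c;\alpha,\beta;a_1,\dots,a_6)=(H;H,L;H,L,H,L,L,H)\). Then for each \(r\), \(\mathcal F\) restricted to the range-\(r\) polytope has its extremum at that corner, with value \(-p(r)/(5832(4r+1))\). For \(r<r_{\mathrm{tilt}}\) this value is negative, since \(p(r)>0\). So \(\mathcal F\) has a strict fixed sign and cannot vanish, and no dissection in the topology has range below \(r_{\mathrm{tilt}}\). Combined with the construction, the family minimum equals \(r_{\mathrm{tilt}}\).

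\emph{Main obstacle.} The hard step is the derivative certificate. The centered gradient components are differences of \(O(1)\) quantities, and some may be small, particularly the entries coupling \(\alpha,\beta\) through \(e,f\). Each must be bounded away from zero uniformly on the box. I would first try direct interval evaluation on the whole box. If some component is not certified, I would bisect the box, or shrink the range threshold toward \(2\times10^{-4}\), which remains admissible since we only need \(r<r_{\mathrm{tilt}}\). The centering also has to be handled correctly, because \(\mathcal F\) is only defined on the sum-one hyperplane: either extend \(\mathcal F\) via \(e=1-2\alpha\), \(f=1-2\beta\) and centre the ambient gradient, or eliminate one variable. Feasible vectors away from that corner could also place the construction's points outside their rails, but this only shrinks the family and does not affect the lower bound.
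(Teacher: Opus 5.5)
Your proposal is correct and follows essentially the same route as the paper. Both arguments use exact root isolation together with monotonicity of \(p\), the elementary geometric inequalities that make the construction a legal dissection, and a centered-gradient sign certificate on a small area box. That certificate shows that the two-level corner \((c;a_1,\dots,a_6;b_1,b_2)=(H;H,L,H,L,L,H;H,L)\) maximizes the closure function, whose value there is a negative multiple of \(p(r)\) for \(r<r_{\mathrm{tilt}}\).

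The differences are cosmetic. The paper certifies the signs \((+,+,-,+,-,-,+,+,-)\) for the polynomial \(\mathcal P=8c\mathcal F\), whose corner value is \(-p(r)/6561\), on the box of radius \(1/6300\). You should also say explicitly that the certified corner is the maximum rather than just ``an extremum'', since only a negative maximum rules out \(\mathcal F=0\).
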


\begin{proof}
The root isolation interval is certified by exact rational arithmetic, and the geometric inequalities needed for the construction---in particular \(0<e,f<1\), \(f>2c\), \(D>e\), and \(q_i>0\)---hold uniformly on that interval. Hence the multiplicative recursion produces a legal noncrossing dissection when \(p(r)=0\).

For the lower bound within this topology, write the nine areas as
\[
x=(c,a_1,\ldots,a_6,b_1,b_2),
\qquad e=1-2b_1,
\qquad f=1-2b_2,
\]
and set \(\mathcal P(x)=8c\mathcal F(x)\). Every area vector of total sum one and range at most \(r\) can be parameterized by
\[
x_i=\frac19+r(z_i-\bar z),
\qquad 0\le z_i\le1.
\]
On the box
\[
\left[\frac19-\frac1{6300},\frac19+\frac1{6300}\right]^9
\]
exact rational derivative bounds give
\[
\operatorname{sgn}\!\left(9\partial_i\mathcal P-\sum_j\partial_j\mathcal P\right)
=(+,+,-,+,-,-,+,+,-).
\]
Thus \(\mathcal P\) is coordinatewise monotone on the range cube, and its maximum occurs at
\[
(c;a_1,\ldots,a_6;b_1,b_2)
=(H;H,L,H,L,L,H;H,L).
\]
At that point
\[
\mathcal P=-\frac{p(r)}{6561}.
\]
Since \(p\) is strictly decreasing on \([0,1/6300]\), with \(p(0)=1\) and unique zero \(r_{\mathrm{tilt}}\), no closure is possible for \(r<r_{\mathrm{tilt}}\). Equality is realized by the construction above.  The verifier \path{proof/n9/tilted_strip_n9_verifier.py} checks the quartic identities, the rational root bracket, the recurrence identities, the geometric inequalities, and all nine projected derivative signs using exact arithmetic. Its decimal coordinates and numerical triangle-area checks are auxiliary displays and do not establish the theorem.
\end{proof}

The proposition is deliberately a fixed-topology theorem. No complete enumeration of all nine-triangle skeletons is used here, so it does not identify \(\Delta(9)\).

\subsection{The complete single-cap two-rail family is noncompetitive}

The failure of the older zig-zag picture is considerably stronger than the fact that the direct two-level quintic root is too large.

\begin{proposition}[Exclusion of the single-cap two-rail family]\label{prop:n9-zigzag-exclusion}
Every nine-triangle dissection consisting of one cap having an entire side of the square as a side and an eight-step two-rail zig-zag in the complementary trapezoid satisfies
\[
R>\frac1{3500}.
\]
The statement allows arbitrary positive triangle areas and all \(2^8=256\) direction sequences; no balance or two-level assumption is imposed.
\end{proposition}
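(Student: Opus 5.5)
Fix a direction sequence \(s\in\{\pm1\}^8\). We use the closure description of \cref{sec:n7-zigzag}, extended to eight steps. Normalize so that the cap has area \(c\) and is cut off by a segment from a square corner to the point \((1,1-2c)\) on the opposite vertical side. Put \(C=1/(4c)\) and \(q_i=C-A_i\). Closure of a single-cap two-rail zig-zag in the complementary trapezoid is then equivalent to
\[
G_s(c;a_1,\ldots,a_8):=\sum_{i=1}^{8}s_i\bigl(\log q_i-\log q_{i-1}\bigr)=0 ,
\]
with the endpoint/cap normalization as in the seven-triangle case, on the hyperplane \(c+\sum a_i=1\). We argue by contradiction. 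Suppose \(R\le 1/3500\). Then every area lies in \(W=[\tfrac19-\tfrac1{3500},\tfrac19+\tfrac1{3500}]\), and \(0<c<1/2\) holds automatically. Parameterize the range polytope by \(x_i=\tfrac19+r(z_i-\bar z)\) with \(z\in[0,1]^9\), exactly as in \cref{subsec:n7-rigidity}. It then suffices to show that \(G_s\) has a fixed nonzero sign on the whole box \(W^9\cap\{\sum x_i=1\}\). Overall sign reversal \(s\mapsto -s\) negates \(G_s\), so only \(128\) representatives need to be checked.

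The check uses two tiers, as in the seven-triangle proof.

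\emph{Tier 1 (most sequences).} Evaluate \(G_s\) at the equal-area point \(x_i=1/9\). Every \(q_i\) there is an explicit rational (\(C=9/4\), \(A_i=i/9\)), so \(\Phi_s=\exp G_s\) is an explicit rational \(\prod_i\bigl((81-4i)/(85-4i)\bigr)^{s_i}\). Next, bound the partial derivatives \(\partial G_s/\partial x_j\) on \(W^9\) by exact rational interval arithmetic. These derivatives are sums of terms \(\pm 1/q_i\) together with the \(c\)-derivative through \(C=1/(4c)\). The mean value theorem along the hyperplane then gives
\[
|G_s(x)-G_s(\tfrac19,\ldots)|\le \tfrac{1}{3500}\sum_j \sup\bigl|\partial_jG_s-\overline{\partial G_s}\bigr| .
\]
Whenever the equal-area value \(|\log\Phi_s|\) exceeds this bound, the sequence is eliminated. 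To avoid logarithms, compare \(\Phi_s\) with rational brackets \(1\pm\varepsilon\), certified by cross-multiplying integers. For unbalanced sequences \(\Phi_s\) is far from \(1\), since the per-step ratios are about \(1\pm 0.05\). So this tier should discard all but the balanced, nearly cancelling patterns.

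\emph{Tier 2 (the few near-cancelling sequences).} For each surviving \(s\), certify by interval bounds that the centered gradient \(\partial_jG_s-\overline{\partial G_s}\) has a fixed strict sign throughout \(W^9\). Then \(G_s\circ x\) is coordinatewise monotone in \(z\), just as in the proof of \cref{prop:n9-tilted}. Its extremal value on the range cube is attained at an explicit two-level vertex, with five values \((1+4r)/9\) and four values \((1-5r)/9\), or the reverse split. At that vertex \(G_s\) reduces to the logarithm of a rational function of \(r\), whose numerator is a closure polynomial of the kind in \cref{a-common-closure-polynomial-pattern-for-n357}. For example, \(P_9\) has smallest positive root \(\approx 3.258\times10^{-4}>1/3500\approx2.857\times10^{-4}\). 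An exact Sturm or rational-bracket check shows that this polynomial has no root in \([0,1/3500]\) and has the sign that keeps \(G_s\) away from \(0\). Hence there is no closure in the whole cube for \(r\le 1/3500\). If the centered gradient sign is not constant on the whole box for some \(s\), bisect the box in \(z\)-coordinates and repeat the corner/monotonicity test on the pieces, using the same exact arithmetic.

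\emph{Main obstacle.} The main obstacle is Tier 2. The threshold \(1/3500\) is only about \(14\%\) below the direct root \(3.258\times10^{-4}\), and other balanced sequences may have extremal-vertex roots much closer to \(1/3500\). The monotonicity certificate must therefore be tight, and the sign of the centered gradient must be genuinely constant on a box of width \(2/3500\) around \(1/9\). I would first try the full box for all balanced sequences. For any sequence where a centered partial derivative straddles zero, I would fall back to box subdivision with multiaffine-free direct interval evaluation. This is possible because, unlike the T-junction model of \cref{sec:five-param}, the zig-zag closure here is an explicit rational function of the areas with no singular denominators on \(W^9\): every \(q_i\ge C-1>1\). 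Finally, note that a geometric dissection also requires the recursion to stay noncrossing. The argument does not need this, since it shows that even the algebraic closure equation fails; this only strengthens the conclusion.
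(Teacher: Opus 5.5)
Your plan is sound, but it is organized differently from the paper's certificate. The paper uses one tier for all $128$ representatives. It bounds the centered gradient $d_j=\partial_jG_s-\overline{\partial G_s}$ on the area box and integrates from the equal-area point along displacements $x-x_0=r(z-\bar z)$, $z\in[0,1]^9$. Elementary inequalities for $\log$ then reduce everything to rational comparisons. Every sequence, including the near-cancelling balanced ones, is excluded with positive margin, so no vertex evaluation or closure polynomial is needed.

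Your Tier~1 is the same idea with a weaker displacement estimate. Using only $|x_j-\tfrac19|\le r$ gives the error term $r\sum_j\sup|d_j|$, which is a bound valid on the whole box $W^9\cap\{\sum_i x_i=1\}$. Because $\sum_j d_j=0$ and $z\in[0,1]^9$, the range structure lets you replace it by $r\sum_j\sup\max(d_j,0)$ (or the corresponding negative parts). That is roughly half as large, and this factor of two is what lets the paper finish in one pass.

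In fact no box-only bound can work for every sequence. Take $s=(-,+,+,-,+,-,-,+)$ with cap area $c=L$. Its two-level closure at $r\approx3.26\times10^{-4}$ has all nine areas within $1/3500$ of $1/9$, so $G_s$ vanishes on $W^9\cap\{\sum_i x_i=1\}$. Thus your stated reduction ("it suffices to show a fixed sign on the whole box") is unattainable in general. The correct target is the range polytope $\mathcal P_{1/3500}$, which is where your Tier~2 actually operates.

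Tier~2 is valid: strict centered-gradient signs, extremum at the selected cube vertex, exact evaluation there. It is the method the paper uses for $n=7$ and for the tilted strip. It also buys more than exclusion, namely sharp per-sequence minimax values, at the price of per-sequence symbolic work. Note that the high/low split at that vertex is dictated by the certified sign pattern and need not be $5/4$ or $4/5$.
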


The certificate works on the complete area box corresponding to \(R\le1/3500\). For each direction sequence it bounds the centered gradient of the logarithmic closure function and integrates from the equal-area point. Elementary inequalities for \(\log p\) reduce the final comparisons to exact rational arithmetic. Overall sign reversal reduces the implementation to 128 representatives, and the weakest exclusion margin remains strictly positive.

Since
\[
r_{\mathrm{tilt}}<\frac1{3500},
\]
\cref{prop:n9-tilted,prop:n9-zigzag-exclusion} imply the following consequence.

\begin{corollary}\label{cor:n9-outside-single-cap}
Every range-minimizing nine-triangle dissection lies outside the single-cap two-rail family.
\end{corollary}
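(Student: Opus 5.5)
The corollary follows by comparing two earlier bounds; no new geometric or computational work is needed.

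First, take any range-minimizing nine-triangle dissection \(D^\ast\). By \cref{prop:attainment} such a minimizer exists, and \(R(D^\ast)=\Delta(9)\). By \cref{prop:n9-tilted},
\[
\Delta(9)\le r_{\mathrm{tilt}}.
\]
Next, I will check that \(r_{\mathrm{tilt}}<1/3500\) exactly. The root \(r_{\mathrm{tilt}}\) is certified to lie in \((0.000127,0.000128)\), and
\[
0.000128=\frac{128}{10^6}<\frac{1}{3500}\approx 0.0002857,
\]
since \(128\cdot 3500=448000<10^6\). Hence
\[
R(D^\ast)\le r_{\mathrm{tilt}}<\frac1{3500}.
\]

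Now suppose, for contradiction, that \(D^\ast\) belonged to the single-cap two-rail family. The family is exactly the one in \cref{prop:n9-zigzag-exclusion}: one cap with a full square side as a side, plus an eight-step two-rail zig-zag, with any of the \(256\) direction sequences and any positive areas. \Cref{prop:n9-zigzag-exclusion} would then give \(R(D^\ast)>1/3500\). This contradicts the previous paragraph, so \(D^\ast\) lies outside the family.

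The only step needing care is the definitional match. Membership in ``the single-cap two-rail family'' in the corollary must mean precisely the hypotheses of \cref{prop:n9-zigzag-exclusion}. That proposition imposes no balance or two-level restriction, so any dissection of this shape is covered regardless of its areas.
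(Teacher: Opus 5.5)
Your proposal is correct and is the paper's argument: a minimizer has range \(\Delta(9)\le r_{\mathrm{tilt}}<1/3500\) by the tilted-strip proposition, while every member of the single-cap two-rail family has range \(>1/3500\) by the exclusion proposition. Your explicit check \(128\cdot 3500<10^6\) simply makes the paper's stated inequality \(r_{\mathrm{tilt}}<1/3500\) concrete. Your appeal to attainment is harmless but not needed, since the claim is only about minimizers.
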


The obstruction is therefore geometric, not merely algebraic: changing the sign sequence or releasing the two-level restriction cannot repair the old topology.

\subsection{A weaker structural question}

The data suggest retaining a much weaker question rather than a new global zig-zag conjecture.

\begin{question}[Two-level optimality]\label{q:two-level}
For every odd \(n\ge3\), does there exist a minimizing dissection whose triangle areas take only two distinct values?
\end{question}

The exact minimizers for \(n=3,5,7\) are compatible with \cref{q:two-level}; for \(n=5\) and \(n=7\), the equality statements above show more strongly that every minimizer has the asserted two-level area multiset. The optimal point within the tilted-strip nine-triangle topology is again two-level. The latter is not known to be globally optimal, so it is not a fourth verified case.

There is a local variational reason why two-level patterns arise repeatedly. Suppose that near a legal configuration the area image is a smooth codimension-one patch
\[
\sum_{i=1}^n a_i=1,
\qquad F(a_1,\ldots,a_n)=0,
\]
and that the area map has the expected rank \(n-2\). At a local range minimizer, any intermediate coordinate \(L<a_i<H\) must satisfy
\[
\partial_iF=\frac1n\sum_j\partial_jF.
\]
Hence, if all centered normal components are nonzero, every area is forced to one of the two endpoints. This observation underlies the centered-gradient minimax arguments above. It is not a general theorem: singular points, rank drops, parameter-space boundaries, or vanishing centered components can support more complicated behavior.

\subsection{\texorpdfstring{Toward the exact value of \(\Delta(9)\)}{Toward the exact value of Delta(9)}}

The new upper bound gives a useful threshold, but an exact theorem requires a genuinely global classification. If \(b\) denotes the number of non-corner boundary nodes, \(t\) the number of internal straight/side nodes, and \(j\) the number of ordinary internal nodes, the nine-triangle node count gives
\[
b+t+2j=7,
\qquad
N=11-j,
\qquad
j\in\{0,1,2,3\}.
\]
Thus a complete proof must cover skeleton sizes \(N=8,9,10,11\); extending a seven-triangle configuration by two pieces cannot substitute for a complete enumeration.

A natural next program is therefore to enumerate all skeletons, straight-node assignments, and square-corner markings in these four node layers; verify a complete parameter representation, including cyclic T-junction dependencies and singular denominators; and then use an exact rational threshold slightly above \(r_{\mathrm{tilt}}\), for example \(1/7800\), to eliminate most types. Any survivors should be explored numerically without imposing a two-level ansatz, then converted into exact area relations, centered-gradient certificates, or elimination identities. A better construction would simply reset the comparison threshold and repeat the process.

For larger odd \(n\), upper-bound searches should distinguish three mechanisms: genuinely new primitive topologies, direct extensions inherited from smaller seeds, and geometric deformations of inherited constructions. The tilted strip belongs to the third class. This distinction is more robust than an envelope built only from direct single-cap zig-zags.

\begin{remark}[A similar obstruction at \(n=13\)]
The optional certificate \path{proof/n9/zigzag13_optional_exclusion.py} gives further evidence that the breakdown at \(n=9\) is not isolated. A legal eleven-triangle single-cap construction has range below \(8.21\times10^{-6}\), so the standard extension gives
\[
\Delta(13)<\frac{11}{13}\,8.21\times10^{-6}<\frac1{140000}.
\]
The same certificate excludes all 4096 single-cap thirteen-triangle direction sequences, with arbitrary continuous areas, below \(1/140000\). This does not determine \(\Delta(11)\) or \(\Delta(13)\); it only shows that the direct single-cap family can again be beaten by inheritance.
\end{remark}

\section{Computational certification and proof artifacts}\label{sec:repro}

The companion proof repository contains the executable certificates and machine-readable data used by the computer-assisted parts of the argument. The mathematical reductions specify what those programs certify: completeness of the combinatorial enumeration; exact area identities and interval enclosures; elimination or recording of every interval case; and analytic treatment of every survivor. Exploratory optimization and decimal displays are not proof dependencies.

The five-triangle chain is stored under \path{proof/n5/}. Its planar-code inputs \path{proof/n5/p7.pc} and \path{proof/n5/p8.pc}, enumerator, symbolic relation classifier, analytic-bound verifier, and final audit reproduce the 20 and 299 isomorphism-class totals and the complete classification in Table~\ref{tab:n5-classification}. The file \path{proof/n5/plantri_provenance.json} records the graph-input provenance and cross-check procedure.

The seven-triangle artifact is preserved under \path{proof/n7/} in its working layout, including \path{proof/n7/REPRODUCE.md} and \path{proof/n7/MANIFEST.json}. Its final audit reports 14,989 general interval cases, 361,046 boxes, seven interval survivors all handled analytically, and zero unresolved cases. The independent Python and C++ interval verifiers remain separate programs. Their arithmetic uses arbitrary-precision integers and signed 128-bit integers, respectively; the latter has the explicit overflow bound stated in Section~\ref{sec:certification}.

The nine-triangle certificates are stored under \path{proof/n9/}. The program \path{proof/n9/tilted_strip_n9_verifier.py} checks the polynomial and recurrence identities, rational root isolation, geometric inequalities, and nine centered derivative signs for the fixed-topology minimax theorem. Its decimal coordinates are auxiliary output. The program \path{proof/n9/zigzag9_continuous_exclusion.py} gives an exact-rational exclusion of all 256 direction sequences in the complete single-cap two-rail family for \(R\le1/3500\). Neither program claims global optimality of the tilted-strip construction. The optional \path{proof/n9/zigzag13_optional_exclusion.py} supports only the thirteen-triangle remark and is not needed for the main theorems.

The repository-level commands \path{scripts/verify_quick.sh} and \path{scripts/verify_all.sh} provide, respectively, fast checks and the complete proof chain. The reproduction guide, trust model, and proof-to-code map are \path{docs/REPRODUCE.md}, \path{docs/TRUST_MODEL.md}, and \path{docs/PROOF_TO_CODE_MAP.md}. Proof-relevant computations use exact graph algorithms, integer or rational arithmetic, and exact symbolic polynomial operations; cached JSON results, plots, and decimal approximations alone are not certificates.

\paragraph{Code and data availability.}
The source code, executable certificates, machine-readable outputs, and
\LaTeX{} source for this article are available at
\url{https://github.com/limuxi33/monsky_problem_minima}. The exact proof artifact
supporting this version of the manuscript is release
\texttt{v1.0-proof}, archived at
\href{https://doi.org/10.5281/zenodo.22817958}
{\texttt{doi:10.5281/zenodo.22817958}}.
\section{Discussion}

The two exact results illustrate complementary approaches to the quantitative Monsky problem. At \(n=5\), exhaustive topology can be compressed into a small catalogue of algebraic area relations, after which a sharp one-variable inequality solves the problem. At \(n=7\), the decisive new issue is the interaction of many more combinatorial types with T-junctions, including cyclic support dependencies. The five-parameter rational multiaffine representation turns these dependencies from an obstacle into a certifiable interval problem.

The nine-triangle results change the interpretation of the small-\(n\) closure pattern. The common polynomials for \(n=3,5,7\) remain a genuine structural phenomenon, but they do not define a globally competitive geometric family beyond seven triangles. At \(n=9\), the entire single-cap two-rail family is separated from the new upper bound by a substantial gap:
\[
r_{\mathrm{tilt}}<\frac1{3500}.
\]
The better construction is a small geometric deformation of an inherited strip picture rather than a direct continuation of the old cap-plus-zig-zag topology.

This suggests a three-way distinction for future upper-bound searches: new primitive topologies, constructions inherited from smaller seeds, and deformations of inherited constructions. The optimized straight-strip formula already shows that inheritance depends on more than the seed value \(\Delta(k)\), while the tilted-strip construction shows that releasing a geometric degree of freedom can beat the best undeformed extension. The optional \(n=13\) calculation points in the same direction.

The principal next exact problem is \(\Delta(9)\). The tilted-strip value should be regarded as a rigorous benchmark, not as a conjectural global answer. A complete solution requires the four skeleton-node layers \(N=8,9,10,11\), together with a verified parameter model for cyclic and singular T-junction configurations. The computational strategy used for \(n=7\) appears adaptable in spirit, but the seven-parameter geometry must be justified before large-scale interval elimination can be trusted.

A separate structural question is whether a global minimizer can always be chosen with only two distinct triangle areas. The exact cases \(n=3,5,7\) and the fixed-topology \(n=9\) minimax are consistent with this possibility, and centered-gradient arguments explain why endpoint patterns are natural in smooth codimension-one families. At present, however, the possible singular, rank-deficient, and boundary cases are substantial enough that \cref{q:two-level} should remain a question rather than a conjectural theorem-shaped prediction.

Other natural problems include geometric classification of the \(n=7\) minimizers beyond the now-rigid area multiset, a conceptual explanation of the closure polynomials in the exact small cases, and systematic discovery of primitive versus inherited versus deformed record constructions for larger odd \(n\). The trust model remains the same throughout: exploratory floating-point optimization may suggest candidates, but final claims should be reduced to exact symbolic identities, exhaustive graph combinatorics, integer or rational interval certificates, and analytic arguments for a small survivor set.

\section*{Acknowledgements}

The author gratefully acknowledges the substantial contribution of OpenAI's
GPT models, accessed through ChatGPT and Codex, throughout the development of
this project. Through an extended dialogue, GPT helped refine the research
program; explore and improve candidate constructions for the five-, seven-,
and nine-triangle cases; formulate proof decompositions and equality-rigidity
arguments; and contribute to the design, implementation, debugging, and audit
of the combinatorial-enumeration, symbolic-algebra, and exact
interval-verification code. GPT also helped identify gaps and overstatements
in intermediate arguments, organize the reproducibility artifact, and revise
the exposition and repository documentation. Several intermediate claims were
proposed, tested, corrected, or discarded through this collaboration. The
author selected the final mathematical statements, reviewed and corrected the
arguments and code, executed the complete verification chains, and assumes
full responsibility for the results and for any remaining errors.

\printbibliography[heading=bibintoc,title={References}]

@article{Monsky1970,
  author  = {Monsky, Paul},
  title   = {On Dividing a Square Into Triangles},
  journal = {The American Mathematical Monthly},
  volume  = {77},
  number  = {2},
  pages   = {161--164},
  year    = {1970}
}

@article{LabbeRoteZiegler2020,
  author  = {Labb{\'e}, Jean-Philippe and Rote, G{\"u}nter and Ziegler, G{\"u}nter M.},
  title   = {Area Difference Bounds for Dissections of a Square into an Odd Number of Triangles},
  journal = {Experimental Mathematics},
  volume  = {29},
  number  = {3},
  pages   = {253--275},
  year    = {2020},
  doi     = {10.1080/10586458.2018.1459961}
}

@article{CampbellBradyNair2007,
  author  = {Campbell, G. and Brady, J. and Nair, A.},
  title   = {Tiling the Unit Square with 5 Rational Triangles},
  journal = {Rocky Mountain Journal of Mathematics},
  volume  = {37},
  number  = {2},
  pages   = {399--418},
  year    = {2007},
  doi     = {10.1216/rmjm/1181068758}
}

@article{BurtonDattaSpreer2022,
  author  = {Burton, Benjamin A. and Datta, Basudeb and Spreer, Jonathan},
  title   = {Flip Graphs of Stacked and Flag Triangulations of the 2-Sphere},
  journal = {The Electronic Journal of Combinatorics},
  volume  = {29},
  number  = {2},
  pages   = {P2.6},
  year    = {2022},
  doi     = {10.37236/10292}
}

\appendix
\section{Computational certificate map}\label{app:certificates}

Table~\ref{tab:certificate-map} uses paths relative to the repository root. The top-level reproduction guide gives the required order and arguments; some programs consume files generated by earlier stages.

\begingroup
\small
\setlength{\LTleft}{0pt}
\setlength{\LTright}{0pt plus 1fill}
\begin{longtable}{@{}>{\raggedright\arraybackslash}p{0.28\textwidth}>{\raggedright\arraybackslash}p{0.68\textwidth}@{}}
\caption{Map from mathematical claims to executable certificates.}\label{tab:certificate-map}\\
\toprule
Mathematical claim & Repository paths and checks \\
\midrule
\endfirsthead
\toprule
Mathematical claim & Repository paths and checks (continued) \\
\midrule
\endhead
\bottomrule
\endfoot
Five-triangle classification &
\path{proof/n5/enumerate_dissections.py}\newline
\path{proof/n5/classify_relations.py}\newline
20 and 299 classes; complete exact relation table. \\[0.5em]
Five-triangle inequalities and coverage &
\path{proof/n5/verify_analytic_bounds.py}\newline
\path{proof/n5/audit_n5.py}\newline
Exact nonzero bounds, sharp construction, and summary consistency. \\[0.5em]
Fixed seven-triangle zig-zag minimax &
\path{proof/n7/research/fixed_sign_certificate.py}\newline
Strict centered-gradient signs, quartic identity, and root isolation. \\[0.5em]
Other seven-triangle direction sequences &
\path{proof/n7/research/all_signs_certificate.py}\newline
62 sequences excluded for \(R\le1/4000\). \\[0.5em]
Ordinary and seven-node layers &
\path{proof/n7/research/boundary_interval.py}\newline
\path{proof/n7/research/one_interior_interval.py}\newline
\path{proof/n7/research/two_interior_interval.py}\newline
\path{proof/n7/research/single_t_interval.py}\newline
All cases excluded for \(R\le1/1000\). \\[0.5em]
Eight-node auxiliary layers &
\path{proof/n7/research/n8_one_t_interval.py}\newline
\path{proof/n7/research/n8_two_t_verify.py}\newline
Exact interval elimination of the remaining auxiliary layers. \\[0.5em]
Skeleton enumeration and cyclic T-junction model &
\path{proof/n7/research/graph_enumeration.py}\newline
\path{proof/n7/research/general_ratio_cases.py}\newline
Complete canonical type lists and five-variable multiaffine data. \\[0.5em]
Principal interval proof &
\path{proof/n7/research/general_ratio_verify.cpp}\newline
\path{proof/n7/research/ratio_python_verify.py}\newline
Exact integer verification with matching case statuses, box counts, and survivor boxes. \\[0.5em]
Three hard survivors &
\path{proof/n7/research/survivor_hard_certificate.py}\newline
Strict derivative signs and the identity \(-Q_7/2401\). \\[0.5em]
Seven-triangle equality rigidity &
\path{proof/n7/research/fixed_sign_certificate.py}\newline
\path{proof/n7/research/survivor_hard_certificate.py}\newline
The strict signs, together with area-preserving recutting, imply the \(4H_7+3L_7\) multiset as proved in Section~\ref{subsec:n7-rigidity}. \\[0.5em]
Seven-triangle global coverage &
\path{proof/n7/research/global_certificate_audit.py}\newline
Every marked type belongs to a handled category; zero unresolved cases. \\[0.5em]
Tilted-strip nine-triangle minimax &
\path{proof/n9/tilted_strip_n9_verifier.py}\newline
Exact identities, root isolation, geometric inequalities, and derivative signs; auxiliary decimal coordinates. \\[0.5em]
Single-cap nine-triangle exclusion &
\path{proof/n9/zigzag9_continuous_exclusion.py}\newline
Exact-rational exclusion of all 256 directions for \(R\le1/3500\). \\
\end{longtable}
\endgroup

\end{document}